\def\section{\@startsection{section}{1}%
\z@{1\linespacing\@plus\linespacing}{1\linespacing}%
{\bf\centering}}
\def\subsection{\@startsection{subsection}{0}%
\z@{\linespacing\@plus\linespacing}{\linespacing}%
{\bf}}
\def\subsubsection{\@startsection{subsubsection}{0}%
\z@{\linespacing\@plus\linespacing}{\linespacing}%
{\bf}}
\newtheorem{theorem}{Theorem}[section]
\newtheorem{lemma}[theorem]{Lemma}
\newtheorem{remark}[theorem]{Remark}
\theoremstyle{definition}
\def\RR{\mathbb{R}}\def\R{\mathbb{R}}
\def\PP{\mathbb{P}}
\def\EE{\mathbb{E}}
\def\al{\alpha}
\def\be{\beta}
\def\ga{\gamma}
\def\de{\delta}
\def\ep{\varepsilon}
\def\la{\lambda}
\def\sk{\smallskip}
\def\mk{\medskip}
\def\ov{\overline}
\def\rd{\mathrm{d}}  
\newcommand{\bra}[1]{\left\lbrace#1\right\rbrace}
\DeclareMathOperator\dimh{dim_H} 
\DeclareMathOperator\dimp{dim_P} 
\begin{document}
\title
{Uniform Hausdorff dimension result for the inverse images of stable L\'evy processes}
\author{Renming Song\ \and Yimin Xiao \and Xiaochuan Yang}

\address{Department of Mathematics, University of Illinois, Urbana, IL 61801, USA}
\email{rsong@illinois.edu}

\address{Dept. Statistics \& Probability, Michigan State University, 48824 East Lansing, MI, USA}
\email{xiaoyimi@stt.msu.edu}
\email{yangxi43@stt.msu.edu, xiaochuan.j.yang@gmail.com}

\thanks{\emph{Key-words}: Stable processes, Inverse image, Hausdorff dimension
 \\ \medskip
\noindent
2010 {\it MS Classification}: Primary 60J75, 60G52, 60G17, 28A80  \\
\noindent
Research of Renming Song was supported in part by the Simons Foundation (\# 429343, Renming Song).
Research of Yimin Xiao was supported in part by the NSF grant DMS-1607089.
}

\begin{abstract}
We establish a uniform Hausdorff dimension result for the inverse image sets of real-valued strictly
$\alpha$-stable L\'evy processes with $1< \al\le 2$.
This extends  a theorem of Kaufman \cite{Kaufman85}
for Brownian motion. Our method is different from that of \cite{Kaufman85}
and depends on covering principles for Markov processes.

\end{abstract}

\maketitle

\baselineskip 0.5 cm

\bigskip \medskip

\section{Introduction}

Let $X=\{X(t), t\ge 0, \PP^x\}$ be a real-valued
strictly $\al$-stable L\'evy process with $\al \in (0, 2]$. Its characteristic 
exponent is given by, for $\xi\in\RR$,
\begin{equation*}
-\log\EE^0[e^{i\xi X(1)}] = \left\{\begin{array}{ll} \sigma^\al |\xi|^\al 
\Big(1- i\be\tan \frac {\pi \al} 2\mbox{sgn} \xi \Big), \ &\hbox{ if } \alpha \ne 1;\\
\sigma|\xi|, &\hbox{ if } \alpha = 1
\end{array}
\right.
\end{equation*}
with some constants $\sigma>0$ and  $\be\in[-1,1]$ which are respectively the 
scale parameter and the skewness parameter. Throughout $\log=\log_e$ denotes 
the natural logarithm. Notice that, in the case of $\alpha = 1$, $X$ is a 
symmetric Cauchy process. When $\alpha = 2$, $X$ is a (scaled) Brownian motion. 
For $0 < \alpha < 2$, $X$ shares the properties of self-similarity, independence  
and stationarity of increments, with Brownian motion, but it has heavy-tailed 
distributions and its sample functions are discontinuous. As such, stable L\'evy 
processes form an important class of Markov processes. Many authors have studied 
the asymptotic and sample path properties of L\'evy processes. We refer to the 
monographs \cite{Bertoin96book} and \cite{Sato13} for systematic accounts on 
L\'evy processes, and to \cite{Taylor86,Xiao04survey} for information on
their fractal properties.

\sk
This note is concerned with a uniform Hausdorff dimension result, Theorem \ref{theo},
for the inverse images of real-valued strictly $\al$-stable L\'evy processes and is 
motivated by the following results of Hawkes \cite{Hawkes71b} and Kaufman \cite{Kaufman85}.

Hawkes \cite{Hawkes71b} considered the Hausdorff dimension of the inverse image
$X^{-1}(F) = \{t \ge 0: X(t) \in F\}$ and proved that if  $1 \le \alpha \le 2$ 
and $F \subseteq \R$
is a fixed Borel set, then for every $x \in \R$,
\begin{equation}  \label{Eq:Hawkes}
 \dimh X^{-1}(F)= 1 - \frac1\al + \frac{\dimh F}\al, \qquad  \PP^x\hbox{-a.s.}
\end{equation}
Here $\dimh $ denotes Hausdorff dimension; see Falconer \cite{Fal},
or \cite{Taylor86,Xiao04survey} for the definitions and properties of
Hausdorff measure and Hausdorff dimension.

Note that the null event on which (\ref{Eq:Hawkes}) does not hold depends 
on $F$. It is natural to ask if the following uniform Hausdorff dimension 
result holds: For every $x \in \R$,
\begin{equation}  \label{Eq:Hawkes1}
\PP^x \bigg(\dimh X^{-1}(F)= 1 - \frac1\al + \frac{\dimh F}\al
\hbox{ for all Borel sets }\, F \subseteq \R \bigg) =1.
\end{equation}
Such a result, when it is valid,  is more useful than (\ref{Eq:Hawkes}) because, 
outside of a single null event,  the dimension formula holds not only for all 
deterministic Borel sets $F \subset \R$  but also for random sets $F$ that 
depend on the sample path of $X$.

We claim that, in the case $0<\alpha<1$, there  is no uniform result like \eqref{Eq:Hawkes1}.
This is because $X^{-1}(F) = \emptyset$ $\PP^x$-a.s. if $\dimh F < 1- \al$.
%
%
The referee has asked us the following 
question that complements the aforementioned claim:\footnote{We thank the anonymous referee for 
this interesting question. Since $X^{-1}(F) = \emptyset$ $\PP^x$-a.s. if $\dimh F < 1- \al$, we have 
modified slightly the referee's question.} For every $x\in \R,$ does 
\begin{equation} \label{Eq:Hawkes3}
\PP^x \bigg( \dimh X^{-1}(F)= 1 - \frac1\al + \frac{\dimh F} \al  \, \hbox{ for all } \, F \in {\mathcal C}\bigg) < 1?
\end{equation}
Here ${\mathcal C}$ is the family of all \underline{deterministic} Borel sets $F \subset \R$ with 
$\dimh F \ge  1- \al$.  
To answer this question, we first recall Theorem 2 of Hawkes \cite{Hawkes71b} : 
If $0 < \al < 1$ and $F \subset \R$ is deterministic and satisfies $\dimh F \ge 1 - \al$,
then
\begin{itemize}
\item[(i)]\, For every $x \in \R$,
\begin{equation} \label{Eq:Hawkes2a}
\sup\Big\{\theta: \, \PP^x\big(\dimh X^{-1}(F) \ge \theta\big) > 0\Big\} = 1 - \frac 1 \al + \frac{\dimh F}\al.
 \end{equation}
\item[(ii)]\, If $x \in F^*$ (see \cite[p.93]{Hawkes71b} for the notation), then
\begin{equation} \label{Eq:Hawkes2b}
\PP^x \bigg( \dimh X^{-1}(F)= 1 - \frac1\al + \frac{\dimh F}\al \bigg) = 1
\end{equation}
\item[(iii)]\, If $F \backslash F^*$ is polar, then for  every $x\in \R,$
\begin{equation} \label{Eq:Hawkes2c}
\PP^x \bigg(\dimh X^{-1}(F)= 1 - \frac1\al + \frac{\dimh F}\al \Big| X^{-1}(F)
\ne \emptyset \bigg) =1.
\end{equation}
\end{itemize}
The answer to the referee's question is ``yes" because we can 
choose a Borel set $F \in {\mathcal C}$ such that $F \backslash F^*$ is polar for $X$ 
(cf. \cite[p.96]{Hawkes71b}), then it follows from 
Hawkes' result (iii) that  for any $x \in \R$ the probability in (\ref{Eq:Hawkes3}) is not more than
\[
\PP^x \bigg(\dimh X^{-1}(F)= 1 - \frac1\al + \frac{\dimh F}\al \bigg) = \PP^x \Big( X^{-1}(F)
\ne \emptyset \Big) < 1.
\]
Motivated by the referee's question and Hawkes' result (\ref{Eq:Hawkes2b}), one may further ask to 
characterize the family ${\mathcal G}$ of deterministic Borel sets $F$ such that for some $x \in \R$ 
(depending on ${\mathcal G}$),
\begin{equation} \label{Eq:Hawkes4}
\PP^x \bigg( \dimh X^{-1}(F)= 1 - \frac1\al + \frac{\dimh F} \al  \, \hbox{ for all } \, F \in {\mathcal G}\bigg) = 1.
\end{equation} 
This question seems to be rather nontrivial. We can imagine that (\ref{Eq:Hawkes4}) may hold for certain
family of self-similar sets on $\R$, but this goes beyond the scope of the present paper.

Our objective of this paper is to study the uniform dimension problem (\ref{Eq:Hawkes1}) for $1 \le \al \le 2.$ 
The validity of (\ref{Eq:Hawkes1})  in the case $\al = 2$ ($X$ is a Brownian motion)
is due to Kaufman \cite{Kaufman85}. His proof relies on the uniform modulus of continuity of
Brownian motion as well as the H\"older continuity of the Brownian local time in the time variable.
For $1 \le \alpha < 2$,  the sample paths of an $\alpha$-stable L\'evy process are discontinuous,
hence Kaufman's method is not applicable.

In the special case of $F = \{z\}$, it follows from Barlow et al \cite[(8.7)]{BPT86} that if $1 < \alpha \le 2$ then
\begin{equation}  \label{Eq:BPT}
\PP^x \Big(\dimh X^{-1}(z)= 1 - \frac1\al    \hbox{ for all  }\, z \in \R \Big) =1.
\end{equation}
This gives a uniform Hausdorff dimension result for the level sets of $X$. However,
for $1 \le \alpha < 2$,
it had been an open problem to prove (\ref{Eq:Hawkes1}) for all Borel sets $F \subseteq \R$;
see \cite[Sec. 8.2]{Xiao04survey} for a discussion.

In this note, we verify  (\ref{Eq:Hawkes1}) by proving the following theorem.

\begin{theorem}\label{theo}
Let $X$ be a real-valued strictly $\al$-stable L\'evy process  with $1< \al\le 2$.
For every $x\in\RR$, (\ref{Eq:Hawkes1}) holds.
\end{theorem}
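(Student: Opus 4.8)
The formula $\dimh X^{-1}(F) = 1 - \tfrac1\alpha + \tfrac{\dimh F}{\alpha}$ must hold uniformly over all Borel sets $F$, so I would prove (i) a uniform upper bound $\dimh X^{-1}(F) \le 1 - \tfrac1\alpha + \tfrac{\dimh F}{\alpha}$ and (ii) a uniform lower bound with the reverse inequality, each holding on a single null event independent of $F$. Since the dimension of a set is governed by its covering behavior, the natural strategy — signaled by the abstract's mention of ``covering principles for Markov processes'' — is to relate covers of $F$ in the state space to covers of $X^{-1}(F)$ in the time domain. Let me address both directions.

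**For the upper bound** I would fix a scale and cover $F$ efficiently, then control how the preimage inflates each covering ball. Concretely, if $\{B(z_j, r_j)\}$ covers $F$ with $\sum r_j^{\,s}$ small (where $s$ is slightly above $\dimh F$), I would bound the expected or almost-sure size of $X^{-1}(B(z_j,r_j))$ by exploiting the $\alpha$-self-similarity and the fact that the local growth of the inverse image of a ball of radius $r$ has time-measure on the order of $r^{\alpha}$ with the preimage having dimension $1 - 1/\alpha$. The key uniform tool here is a \emph{maximal inequality} or a uniform modulus of continuity for the \emph{inverse local time} (the subordinator governing passage times), allowing me to replace the path-dependent covering estimate by a deterministic one valid simultaneously at all dyadic scales. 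The exponent $s/\alpha \cdot \alpha + (1 - 1/\alpha) = 1 - 1/\alpha + s/\alpha$ should emerge from this scaling bookkeeping. I would need the condition $\alpha > 1$ precisely to guarantee that points are non-polar (so the level sets $X^{-1}(z)$ are genuinely $(1-1/\alpha)$-dimensional, as in \eqref{Eq:BPT}), which is the starting ingredient.

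**For the lower bound**, which I expect to be the main obstacle, the standard route is a capacity / energy argument: one constructs, for each $F$, a measure on $X^{-1}(F)$ with finite $\beta$-energy for every $\beta < 1 - 1/\alpha + \dimh F/\alpha$, and applies the Frostman energy criterion. The difficulty is making this \emph{uniform} in $F$ — the null exceptional set must not depend on $F$. I would attack this by proving a single strong covering principle of the following shape: almost surely, for every ball $B(z,r)$ and every $t$, the preimage $X^{-1}(B(z,r)) \cap [t, t+r^{\alpha}]$ cannot be too large, quantified by a uniform density bound on the occupation measure (equivalently, Hölder-type regularity of the jointly measurable local time / occupation density of $X$, which exists for $1 < \alpha \le 2$). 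Combining a uniform upper density bound for the occupation measure with the mass distribution principle transfers a Frostman measure on $F$ to a lower dimension bound on $X^{-1}(F)$ valid simultaneously for all $F$. The crux is therefore establishing the uniform occupation-density estimate via a second-moment computation over a dense grid of space-time balls followed by a Borel--Cantelli argument, which replaces Kaufman's use of Brownian local-time Hölder continuity by the analogous regularity theory for stable occupation densities; handling the discontinuity of the paths — so that one works with occupation measures rather than modulus of continuity — is precisely what makes the stable case harder than the Brownian case treated in \cite{Kaufman85}.
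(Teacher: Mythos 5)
Your overall architecture (separate uniform upper and lower bounds, covering principles, a Frostman measure on $F$ transferred to a measure on $X^{-1}(F)$, mass distribution principle) matches the paper's, but each half has a genuine gap at precisely the point where the real work happens. For the lower bound, the decisive missing ingredient is control of the $\mu$-mass of the \emph{range} over short time intervals. The measure you would need is $\la([a,b])=\int_\RR (L^x_b-L^x_a)\,\mu(\rd x)$, and its ball masses factor as $\la([t,t+h])\le L^*([t,t+h])\cdot \mu\big(\overline{X([t,t+h])}\big)$. Perkins' modulus (Lemma \ref{lem:mod_LT}), which is the rigorous form of your ``H\"older-type regularity of the local time / occupation density,'' controls only the first factor, of order $h^{1-\frac1\al-\ep}$. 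If that is all you have, the best bound is $\la([t,t+h])\le L^*([t,t+h])\,\mu(\RR)$, which yields only $\dimh X^{-1}(F)\ge 1-\frac1\al$ and loses the $\dimh F/\al$ term entirely. The whole point is the second factor: for Brownian motion Kaufman bounds $\mu(X([t,t+h]))$ by the uniform modulus of continuity (the range has diameter $\approx h^{1/2}$), and for a jump process this must be replaced by the uniform covering principle for the range (Lemma \ref{lem:cov_range}, from Hawkes--Pruitt/\cite{SXXZ}): a.s. for all large $n$ and \emph{all} intervals $I$ of length $2^{-n}$, $X(I)$ is covered by a bounded number $K$ of intervals of length $2^{-n\ga}$ with $\ga<1/\al$, whence $\mu(\overline{X(I)})\le K2^{-n\ga(\dimh F-\ep)}$. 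Your proposed substitute, a uniform bound on the occupation measure of balls, cannot play this role: occupation/local-time regularity bounds how much \emph{time} the path spends near points, not how little $\mu$-mass its range can carry; a path can visit many well-separated parts of $F$ in a short interval while accruing tiny occupation everywhere.

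For the upper bound, your key tool (``a maximal inequality or uniform modulus of continuity for the inverse local time'') is too weak in a specific way. To run Borel--Cantelli simultaneously over the $\sim 2^nK$ intervals $U$ of side $2^{-n}$ at every scale, one needs an \emph{exponential} tail for the number of returns of $X$ to $U$ at time-spacing $2^{-n\al}$. The paper gets this (Lemma \ref{l:covering}) from the strong Markov property combined with Port's hitting-time asymptotics for recurrent stable processes (Lemma \ref{PS}): the non-return probability satisfies $1-p_n\ge c_T2^{-n(\al-1)}$, so $\PP^x(\hbox{at least } k \hbox{ returns})\le p_n^k$, and taking $k=2^{n\de}$ with $\de>\al-1$ beats the union bound. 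Local-time or self-similarity considerations alone give only first-moment bounds on the number of returns (for a jump process a single visit to a ball may carry arbitrarily little occupation), and Markov's inequality is hopeless against exponentially many balls; identifying where the exponent $\al-1$ comes from (recurrence and the decay rate $\PP^x(T_B>t)\sim L_B(x)t^{-(1-\frac1\al)}$) is the crux you leave unaddressed. Relatedly, your claim that $\al>1$ is needed in the upper bound ``to guarantee that points are non-polar'' is misplaced: the covering lemma and the upper bound hold for $\al=1$ as well, where points \emph{are} polar; non-polarity and the existence of local times are what restrict the lower bound (and hence the theorem) to $1<\al\le 2$.
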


As mentioned above, the case of $\alpha= 2$ has already been proved by Kaufman \cite{Kaufman85}
whose proof relies on special properties of Brownian motion. Our proof of  Theorem \ref{theo} provides
an alternative proof of his theorem.

The proof is split naturally into the upper bound part and lower bound part. To show the upper  bound,
we design a new covering principle (see Lemma \ref{l:covering} below) for the inverse images of recurrent
processes (thus it is applicable to $\alpha = 1$).  This covering lemma constitutes the key technical
contribution of the present paper, and we expect
 it to be useful for other discontinuous Markov processes.
Note that Lemma \ref{l:covering} in this paper is different from the covering lemma of \cite[Lemma 2.2]{SXXZ},
which is only applicable to transient Markov processes
(see Remark \ref{Re:cp} in Section 2 of this paper).  To prove the
lower bound in (\ref{Eq:Hawkes1}), we make use of the uniform modulus of continuity (in time) of the maximum
local time of $X$ due to Perkins \cite{Perkins86}, together with a covering principle for the range of $X$ in
\cite{HP74,Xiao04survey, SXXZ}. Since $X$ has no local time when  $\alpha = 1$, the proof of the lower bound
in Theorem \ref{theo} is valid only for $1< \al\le 2$. We think that (\ref{Eq:Hawkes1}) holds for  $\alpha = 1$
as well, but have not been able to give a complete proof.

\section{Proof of the upper bound}

In this section we assume that $1\le \al\le 2$. We will show that
\begin{equation}  \label{Eq:up}
\PP^x \bigg(\dimh X^{-1}(F) \le 1 - \frac1\al + \frac{\dimh F}\al
\hbox{ for all Borel sets }\, F \subseteq \R \bigg) =1.
\end{equation}

For any Borel set $B$, we denote by $T_B$ the first hitting time of $B$ by the process $X$.
We state an asymptotic result due to Port \cite[Thm. 2 and Thm. 4]{Port67}
on the first hitting time of compact sets by recurrent strictly stable processes, see \cite[Thm. 22.1]{PortStone71AIF}
for similar results in a more general setting.
Note that when $1\le \al\le 2$,  $X$ is recurrent by the Chung-Fuchs criterion (\cite[Thm. 16.2]{PortStone71AIF}),
and  any nonempty set has positive capacity, so the condition in \cite[Thm. 22.1]{PortStone71AIF} is satisfied.

\begin{lemma}\label{PS}
\begin{itemize}
\item[(1).] If $1<\al\le 2$, then for any bounded interval $B$ and any  $x\in \RR$,
\begin{align*}
\PP^x(T_B>t) \sim L_B(x)t^{-1+\frac{1}{\al}}, \qquad \mbox{ as } t\to\infty,
\end{align*}
where $L_B(x)$ is bounded from above on compact sets and is positive for $x\not\in \ov B$, the closure of the
set $B$. Here, $f(t)\sim g(t)$ means $\lim_{t\to\infty} f(t)/g(t)=1$.
\item[(2).] If $\alpha=1$, then for any bounded interval $B$ and any $x\in \RR$,
\begin{align*}
\PP^x(T_B>t) \sim
\frac {L_B(x)} {\log  t}, \qquad \mbox{ as } t\to\infty,
\end{align*}
where $L_B(x)$ is bounded from above on compact sets and is positive for $x\not\in \ov B$.
\end{itemize}
\end{lemma}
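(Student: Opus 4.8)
The plan is to translate the tail of $T_B$ into the small-$\lambda$ behaviour of its Laplace transform, to analyse the latter through the resolvent of $X$ and the homogeneity of its characteristic exponent, and then to transfer the conclusion back by a Tauberian theorem. The starting point is the potential-theoretic identity for the subprocess killed at an independent exponential time of rate $\lambda$: writing $U^\lambda f(x)=\EE^x\int_0^\infty e^{-\lambda t}f(X(t))\,\rd t$ for the $\lambda$-resolvent, with translation-invariant density $u^\lambda(x,y)=u^\lambda(y-x)$, and letting $e^\lambda_B$ be the $\lambda$-equilibrium (capacitary) measure of $B$, one has $\int_B u^\lambda(x,y)\,e^\lambda_B(\rd y)=\EE^x[e^{-\lambda T_B}]$ for every $x$, because reaching $B$ before the exponential clock rings has probability $\EE^x[e^{-\lambda T_B}]$. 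Since $X$ is recurrent for $1\le\al\le 2$, the left-hand side tends to $1$ as $\lambda\downarrow 0$, so everything reduces to quantifying the rate at which $1-\EE^x[e^{-\lambda T_B}]$ vanishes.

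First I would pin down the divergence of the resolvent at the origin. The characteristic exponent $\Psi$ is homogeneous of degree $\al$, so $u^\lambda(0)=\frac{1}{2\pi}\int_\RR(\lambda+\Psi(\xi))^{-1}\rd\xi$ (a real quantity, as $\Psi(-\xi)=\overline{\Psi(\xi)}$), and the substitution $\xi=\lambda^{1/\al}\eta$ together with $\Psi(\lambda^{1/\al}\eta)=\lambda\Psi(\eta)$ gives the exact scaling $u^\lambda(0)=\lambda^{-(1-1/\al)}\,u^1(0)$, where $u^1(0)=\frac{1}{2\pi}\int_\RR(1+\Psi(\eta))^{-1}\rd\eta$. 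For $1<\al\le 2$ this integral converges, because $|\Psi(\eta)|$ grows like $|\eta|^\al$ with $\al>1$, so the resolvent at the origin blows up at the sharp rate $\lambda^{-(1-1/\al)}$. For $\al=1$ the integral diverges---points are polar for the Cauchy process---and the rate is governed instead by the logarithmic growth $a(z)\sim(\pi\sigma)^{-1}\log|z|$ of the recurrent potential kernel; this borderline case has a genuinely different mechanism, for which we rely on Port.

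Next I would feed this into the equilibrium identity. Writing $e^\lambda_B=m_\lambda\,\mu^\lambda_B$ with $m_\lambda=\|e^\lambda_B\|$ the $\lambda$-capacity and $\mu^\lambda_B$ a probability measure, and using the recurrent potential kernel $a(z)=\lim_{\lambda\downarrow 0}(u^\lambda(0)-u^\lambda(z))$ to expand $u^\lambda(x,y)=u^\lambda(0)-a(y-x)+o(1)$ uniformly on compacts, the identity becomes $\EE^x[e^{-\lambda T_B}]=m_\lambda\big(u^\lambda(0)-A_B(x)+o(1)\big)$ with $A_B(x)=\int_B a(y-x)\,\mu_B(\rd y)$ and $\mu_B=\lim\mu^\lambda_B$ the normalised recurrent equilibrium measure of $B$. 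Because the equilibrium potential $A_B$ is constant, equal to a Robin constant $\kappa_B$, on $B$, taking $x\in B$ forces $m_\lambda\sim(u^\lambda(0)-\kappa_B)^{-1}$; substituting back for general $x$ yields
\[
1-\EE^x[e^{-\lambda T_B}]\sim\frac{A_B(x)-\kappa_B}{u^1(0)}\,\lambda^{1-1/\al},\qquad\al>1,
\]
and the analogous statement $1-\EE^x[e^{-\lambda T_B}]\sim L_B(x)/\log(1/\lambda)$ for $\al=1$. A Karamata Tauberian theorem applied to the monotone function $t\mapsto\PP^x(T_B>t)$, through $\int_0^\infty e^{-\lambda t}\PP^x(T_B>t)\,\rd t=\lambda^{-1}\big(1-\EE^x[e^{-\lambda T_B}]\big)$, then converts these into the claimed tails, with $L_B(x)=(A_B(x)-\kappa_B)/(\Gamma(1/\al)\,u^1(0))$ in case (1). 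Since $a$ is continuous and the recurrent equilibrium potential attains its minimum $\kappa_B$ exactly on $\ov B$, we obtain $L_B(x)=0$ for $x\in\ov B$, $L_B(x)>0$ for $x\notin\ov B$, and boundedness of $L_B$ on compacts.

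The main obstacle is the uniform control of the equilibrium measure $e^\lambda_B$ as $\lambda\downarrow 0$. Because $X$ is recurrent, the $\lambda$-capacity $m_\lambda$ itself tends to $0$, so isolating the exact leading coefficient $L_B(x)$---rather than only the order $\lambda^{1-1/\al}$ (resp.\ $1/\log(1/\lambda)$)---requires showing that $\mu^\lambda_B$ converges to the recurrent equilibrium measure $\mu_B$, with enough quantitative control that the $o(1)$ remainders in the resolvent expansion do not corrupt the two-term asymptotics of $m_\lambda$ from which the coefficient is extracted. This is exactly where the delicate recurrent potential theory for stable processes developed by Port is indispensable, and it is why we invoke \cite{Port67} and \cite[Thm.~22.1]{PortStone71AIF} for the quantitative conclusion rather than reproving it here.
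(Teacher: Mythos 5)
Your proposal is correct in outline, and in the end it rests on exactly the same foundation as the paper: the paper gives no proof of this lemma at all, but simply quotes Port's Theorems 2 and 4 from \cite{Port67} (pointing to \cite[Thm.~22.1]{PortStone71AIF} for a more general setting), after checking that the hypotheses hold, namely recurrence via the Chung--Fuchs criterion and positivity of the capacity of nonempty sets. What you add, and the paper omits, is the internal architecture of Port's argument: the $\lambda$-capacitary identity $\EE^x[e^{-\lambda T_B}]=\int_B u^\lambda(x,y)\,e^\lambda_B(\rd y)$, the exact scaling $u^\lambda(0)=\lambda^{-(1-1/\alpha)}u^1(0)$ forced by homogeneity of the characteristic exponent (which is precisely where the exponent $1-\frac{1}{\alpha}$, and for $\alpha=1$ the logarithm, come from), the expansion of $u^\lambda$ via the recurrent potential kernel $a$, and the Karamata/monotone-density step converting Laplace asymptotics into the claimed tail asymptotics. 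This reconstruction is sound, including the identification $L_B(x)=(A_B(x)-\kappa_B)/(\Gamma(1/\alpha)\,u^1(0))$, its vanishing on $\ov B$ and positivity off $\ov B$; and you correctly locate the genuinely hard step, the convergence of the normalized $\lambda$-equilibrium measures to the recurrent equilibrium measure with enough control to justify the two-term expansion, which you, like the paper, take from Port rather than prove. One small caveat: for skewness $\beta\neq 0$ the process is not symmetric, so the capacitary measure in your identity must be taken relative to the dual process $-X$; this duality bookkeeping is handled in \cite{PortStone71AIF}. In short, your proposal is not an independent proof, but it is an accurate expansion of the citation that constitutes the paper's entire treatment of this lemma, and it has the merit of making visible why the answer has the stated form.
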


  The main tool to obtain our upper bound is the following covering lemma.
Before stating this lemma, we introduce some notation.
Let $\mathcal U_n$ be any partition of $\RR$ with intervals of length $2^{-n}$ and $\mathcal D_n$ be any partition
of $\RR_+$ with intervals of length $2^{-n\al}$.
The choices of partitions have no effect on the result.

\begin{lemma} \label{l:covering}
 Let $1\le\al\le 2$. Let $\de> \al-1$ and $T>0$.
$\PP^x$-a.s., for all $n$ large enough and every $U\in\mathcal U_n$, $X^{-1}(U)\cap[0,T]$ can
be covered by $2\cdot 2^{n\de}$ intervals from $\mathcal D_n$.
\end{lemma}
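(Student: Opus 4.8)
The plan is to reduce Lemma~\ref{l:covering} to a large--deviation estimate for a single interval and then sum over intervals. Fix $x\in\RR$, $T>0$, and write $N_n(U)$ for the number of intervals of $\mathcal D_n$ that meet $X^{-1}(U)\cap[0,T]$; I must bound $N_n(U)$ by $2\cdot2^{n\de}$ for all $U\in\mathcal U_n$ simultaneously. Since a covering valid for one value of $\de$ remains valid for every larger value, and since $[0,T]$ meets only $\lceil T2^{n\al}\rceil$ intervals of $\mathcal D_n$, I may assume $\al-1<\de<\al$. An interval $U$ with $X^{-1}(U)\cap[0,T]=\emptyset$ contributes nothing, and on the event $\{\sup_{s\le T}|X(s)-x|\le A_n\}$ only $O(A_n2^{n})$ intervals $U$ are relevant; choosing $A_n$ to grow polynomially and using a tail bound for $\sup_{s\le T}|X(s)|$ to dispose of the complementary event, it suffices to prove that
\[
\PP^x\big(N_n(U)>2^{n\de}\big)\le \exp\!\big(-c\,2^{n\al(\de-\al+1)}\big)
\]
uniformly in $U\in\mathcal U_n$ (and in $x$ over compacts), after which Borel--Cantelli finishes the proof.

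Next I would set up a renewal decomposition of $N_n(U)$ via the strong Markov property. Let $\wh U\supset U$ be the interval concentric with $U$ of length $3\cdot2^{-n}$, and define inductively $\rho_0=T_U$, let $\ga_i$ be the first exit time of $\wh U$ after $\rho_i$, and let $\rho_{i+1}$ be the first hitting time of $U$ after $\ga_i$. Because every visit of $X$ to $U$ falls inside one of the sojourn intervals $[\rho_i,\ga_i]$, and $[\rho_i,\ga_i]$ meets at most $2+2^{n\al}(\ga_i-\rho_i)$ intervals of $\mathcal D_n$, one has
\[
N_n(U)\le 2V+2^{n\al}\sum_{i<V}(\ga_i-\rho_i),
\]
where $V=\#\{i\ge0:\rho_i\le T\}$ is the number of sojourns. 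The exit times $\ga_i-\rho_i$ from the bounded interval $\wh U$ have light (essentially exponential) tails at the one--block scale $2^{-n\al}$, so the second term is comparable to $V$ and is controlled by the same estimate; the crux is therefore to bound $V$.

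The sojourn count $V$ is governed by the inter--return times $\tau_i:=\rho_{i+1}-\ga_i$, which by the strong Markov property at $\ga_i$ equal the hitting time $T_U$ of the process restarted at $X(\ga_i)\notin\wh U$. By the self--similarity of $X$, rescaling space by $2^{n}$ and time by $2^{n\al}$ carries $U$ to a unit interval, $\wh U$ to a fixed neighborhood, and $X(\ga_i)$ to a point at distance at least $1$; Lemma~\ref{PS} then gives, uniformly over such starting points and over $U$, a heavy--tailed lower bound $\PP(\tau_i>t\mid\mathcal F_{\ga_i})\ge \un c\,2^{-n(\al-1)}t^{-(1-1/\al)}$ on the relevant range of $t$ (with the logarithmic analogue of Lemma~\ref{PS}(2) when $\al=1$). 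This yields the conditional Laplace bound $\EE[e^{-\la\tau_i}\mid\mathcal F_{\ga_i}]\le\exp(-c\,2^{-n(\al-1)}\la^{1-1/\al})$ for small $\la$. Since $\{V>m\}\subseteq\{\sum_{i<m}\tau_i\le T\}$, exponential Chebyshev with this bound, optimized over $\la$, produces exactly the estimate displayed above, with exponent of order $\big(2^{-n(\al-1)}2^{n\de}\big)^{\al}=2^{n\al(\de-\al+1)}$, which is large precisely because $\de>\al-1$.

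The main obstacle is the uniform heavy--tail lower bound for the return times used in the third paragraph. Lemma~\ref{PS} is an asymptotic statement as $t\to\infty$ with a constant $L_U(\cdot)$, whereas I need a quantitative bound valid uniformly over all admissible exit points $X(\ga_i)$ and over all $U\in\mathcal U_n$, and on a range of $t$ reaching down to the block scale. The uniformity over exit points should follow from the positivity of $L_U$ off $\ov U$ together with the fact that more distant starting points only lengthen returns, so that the relevant infimum is attained near $\partial\wh U$ and is strictly positive; the uniformity over $U$ and the passage to the correct range of $t$ come from self--similarity. Once this estimate is secured, the large--deviation bound, the union bound over the $O(A_n2^{n})$ relevant intervals, and the summation in $n$ are routine. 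Note that the argument uses only recurrence and the hitting--time asymptotics of Lemma~\ref{PS}, and no local times, so it is valid at $\al=1$ as well.
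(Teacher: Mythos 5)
Your renewal decomposition and Laplace-transform large-deviation skeleton is sound as a skeleton (and would even give a stronger single-interval bound, $\exp(-c2^{n\alpha(\delta-\alpha+1)})$ versus the paper's $\exp(-c2^{n(\delta-\alpha+1)})$), but the proof is incomplete exactly at the point you yourself flag as "the main obstacle," and your proposed way around it does not work. You need the quantitative tail bound $\PP^y(T_{U}>t)\ge \underline{c}\,t^{-(1-1/\alpha)}$ \emph{uniformly} over all possible exit positions $y=X(\gamma_i)$, whereas Lemma \ref{PS} is a pointwise asymptotic as $t\to\infty$ with a constant $L_U(y)$ and no stated uniformity. Your justification rests on two claims: (a) "more distant starting points only lengthen returns," i.e.\ stochastic monotonicity of interval hitting times in the starting distance, which is unproven and far from obvious for asymmetric (e.g.\ spectrally one-sided) strictly stable processes with two-sided or one-sided jumps; and (b) that "the relevant infimum is attained near $\partial\widehat U$," which presupposes (a), since for $\alpha<2$ the process exits $\widehat U$ by a jump and the set of admissible exit points is all of $\RR\setminus\widehat U$, an unbounded set on which an infimum need not be attained. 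Note also that you genuinely need the exponent $1-1/\alpha$ here: a cruder uniform bound via first passage below a fixed level only gives a tail of order $t^{-(1-\rho)}$ ($t^{-1/2}$ in the symmetric case), and rerunning your optimization with that tail yields the exponent $2^{n(2\delta-\alpha)}$, which fails for $\delta\in(\alpha-1,\alpha/2]$ whenever $\alpha<2$.

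It is instructive to compare with the paper, whose argument is built precisely to avoid any such uniformity question. The paper uses delayed return times $\tau_k=\inf\{s>\tau_{k-1}+2^{-n\alpha}:|X(s)-z|<2^{-n}/2\}$ and bounds $\PP^x(\tau_k<T)\le p_n^k$, where, after a triangle-inequality step, translation invariance and scaling, $p_n=\PP^0\left(\inf_{1\le s\le T2^{n\alpha}}|X(s)|\le 1\right)$ is a single quantity independent of the return point, of $U$, and of $z$. The key lower bound $1-p_n\ge c_T2^{-n(\alpha-1)}$ is then obtained by forcing $X(1)\in[2,3]$ (an event of fixed positive probability) and requiring the increment process $X(\cdot)-X(1)$, which by independence of increments is a fresh copy started at $0$, to avoid the \emph{fixed} interval $[-4,-1]$ for time $T2^{n\alpha}$; thus Lemma \ref{PS} is invoked only at one fixed starting point and one fixed interval, where the pointwise asymptotic suffices. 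Your gap is in fact repairable — e.g.\ for $1\le d(y,U)\le (t/t_0)^{1/\alpha}$ rescale space by $d=d(y,U)$, enlarge the shrunken target $U/d$ to a unit interval at unit distance from the starting point (enlarging the target only decreases the avoidance probability), reducing to two fixed configurations covered by Lemma \ref{PS}; for $d\ge (t/t_0)^{1/\alpha}$ use $\PP^y(T_U>t)\ge\PP^0\left(\sup_{s\le t}|X(s)|<d\right)\ge c>0$ — but some such argument must be supplied, and as written your proof is missing it at its crucial step.
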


\begin{proof}
(1) Suppose first $1<\al\le 2$.
For a fixed interval $U\in\mathcal U_n$, write $U=(z- \frac{2^{-n}}{2},z+\frac{2^{-n}}{2})$ for some $z\in\RR$.
Let $\tau_0=0$ and, for all $k\ge 1$, define
\begin{align*}
\tau_k = \inf\bra{ s>\tau_{k-1} + 2^{-n\al}:  |X(s)-z| < \frac{2^{-n}}{2} },
\end{align*}
with the convention that $\inf\emptyset=\infty$. It is clear that
$X^{-1}(U)\subset \bigcup_{i=0}^\infty [\tau_i, \tau_i+2^{-n\al}]$,
which implies that
\begin{align*}
\bra{\tau_k\ge T} \subset \bra{X^{-1}(U)\cap[0,T] \mbox{ can be covered by } k \mbox{ intervals of length } 2^{-n\al}}.
\end{align*}
Therefore,
\begin{align*}
\bra{X^{-1}(U)\cap[0,T] \mbox{ cannot be covered by } k \mbox{ intervals of length } 2^{-n\al} } \subset \bra{\tau_k< T}.
\end{align*}
Note by spatial homogeneity and scaling, we have  that
$$\PP^x\left( \inf_{2^{-n\al}\le s\le T} |X(s)-x|\le 2^{-n}
\right)= \PP^0\left( \inf_{1\le s\le T2^{n\al}} |X(s)| \le 1 \right):=p_n.$$
Due to the right continuity of the sample paths,
we have $X(\tau_{k-1})\in \ov U$ as $\tau_{k-1}<T$.  By the strong Markov property,  we obtain
\begin{align*}
\PP^x(\tau_k< T) &= \PP^x(\tau_k< T|\tau_{k-1}< T)\PP^{x}(\tau_{k-1}\le T) \\
&\le \sup_{y\in \ov U} \PP^y\left( \inf_{2^{-n\al}\le s\le T} |X(s)-z|\le 2^{-n}/2
\right) \PP^x(\tau_{k-1}\le T)\\
&\le \sup_{y\in \ov U} \PP^y\left( \inf_{2^{-n\al}\le s\le T} |X(s)-y|-|y-z|\le 2^{-n}/2
\right) \PP^x(\tau_{k-1}\le T)\\
&\le \sup_{y\in \ov U} \PP^y\left( \inf_{2^{-n\al}\le s\le T} |X(s)-y| \le 2^{-n}
\right) \PP^x(\tau_{k-1}\le T) \\
&=  p_n \cdot \PP^x(\tau_{k-1}\le T).
\end{align*}
By induction, we obtain $$\PP^x(\tau_k< T) \le p_n^k.$$

\sk
Next we show that
there exists a constant $c_T$ such that $p_n\le 1-c_T 2^{-n\al(1-\frac 1 \al)}$.
By the independence of increments and the fact that $X(1)$ is supported on $\RR$ (\cite[Thm. 1]{Taylor67}),
\begin{align*}
1-p_n &\ge \PP^0(2\le X(1)\le 3, \inf\{t\ge 1: X(t)-X(1) \in [-4,-1]\}\ge T 2^{n\al} ) \\
&\ge c \,\PP^0(T_{[-4,-1]}\ge T2^{n\al}).
\end{align*}

 Lemma \ref{PS} implies that
\begin{align*}
1-p_n \ge c_T 2^{-n\al(1-\frac 1 \al)},
\end{align*}
as desired.
For $n,K\ge 1$,  define the event $A^\de_n$ by
\begin{align*}
\bra{ \exists U\in\mathcal U_n\cap [-K,K], \mbox{ s.t. } X^{-1}(U)\cap[0,T] \mbox{ cannot be  covered by }
2^{n\de} \mbox{ intervals of length }2^{-n\al}}.
\end{align*}
Here $U\in\mathcal U_n\cap[-K,K]$ means that $U\in\mathcal U_n$ and $U\subset [-K,K]$.  We have
for $\de>\al -1$,
\begin{align*}
\sum_{n=1}^\infty \PP^x(A_n^{\de}) &\le \sum_{n=1}^\infty \sharp\{U\in\mathcal U_n: U\cap[-K,K]
\neq\emptyset\} (p_n)^{2^{n\de}} \\
& \le 2K\sum_{n=1}^\infty  2^{n} (1-c_T 2^{-n\al(1-\frac 1 \al)})^{2^{n\de}} \\
&\le 2K\sum_{n=1}^\infty \exp\left(n(\log 2) - c_T2^{n(\de-\al+1)}\right) < \infty.
\end{align*}
Since any interval of length $2^{-n\al}$ is covered by two intervals from $\mathcal D_n$,
the conclusion for all $U\subset [-K,K]$ follows from the Borel-Cantelli Lemma.
Letting $K\to \infty$ completes the proof.

(2) Now consider $\al=1$.  The proof of this case is basically the same as that of Part (1), except that $1-p_n
\ge c_T /n$ by Lemma \ref{PS}.(2), and
\begin{align*}
\sum_{n=1}^\infty \PP^x(A_n^{\de}) \le 2K\sum_{n=1}^\infty \exp(n(\log 2) - c_T 2^{n\de}/n ) <\infty.
\end{align*}
We omit the details.
\end{proof}

\begin{remark}\label{Re:cp}
As is said in the Introduction,  the covering principle in \cite[Lemma 2.2]{SXXZ} is not applicable here.
Intuitively,  a recurrent process visits a fixed interval infinitely often, hence we could not expect that
the inverse images could be covered by finite number of intervals.   Mathematically,
the condition in \cite{SXXZ} is
\begin{align*}
\PP^x\left(\inf_{t_n\le t<T} |X(s)-x|\le r_n\right) \le K r_n^\de
\end{align*}
for some $\de,p>0$ and $\sum_{n=1}^\infty r_n^p<\infty$,  which is not satisfied for recurrent Markov processes.
\end{remark}

Let us prove the upper bound (\ref{Eq:up}).

\begin{proof}[Proof of Theorem \ref{theo}: upper bound.]
We first consider the case $1 < \alpha \le 2$.
For any Borel set $F$, let $\theta>\dimh F$ and $\de>\al-1$. Then there exists a sequence of intervals $\bra{U_i}$
of length $2^{-n_i}$ such that
\begin{align*}
F\subset \bigcup_{i=1}^\infty U_i \quad \mbox{ and } \quad \sum_{i=1}^{\infty} 2^{-n_i\theta} <1.
\end{align*}
Fix a $T>0$ for now.
By Lemma \ref{l:covering},
each $X^{-1}(U_i)\cap[0,T]$ can be covered by $2\cdot 2^{n_i\de}$ intervals $\bra{I_{i,k}}$
(of length $2^{-n_i\al}$) in $\mathcal D_{n_i}$, thus we see that
\begin{align*}
X^{-1}(F) \cap [0,T] \subset \bigcup_{i=1}^\infty\bigcup_{k=1}^{2\cdot 2^{n_i\de}} I_{i,k}.
\end{align*}
Moreover, let $d=(\theta+\de)/\al$,
\begin{align*}
\sum_{i=1}^{\infty} \sum_{k=1}^{2\cdot 2^{n_i\de}} [\mbox{diam}(I_{i,k})]^d = 2 \cdot\sum_{i=1}^{\infty} 2^{n_i\de}2^{-n_i\al d}
= 2\cdot \sum_{i=1}^{\infty} 2^{-n_i\theta}< 2.
\end{align*}
This proves $\dimh X^{-1}(F)\cap [0,T] \le d$. Letting $\theta\downarrow \dimh F$, $\de\downarrow (\al-1)$ and
$T\uparrow \infty$ yields the desired upper bound.

Now we consider the case of $\alpha = 1$. One could repeat the argument above and use
Lemma \ref{l:covering} to get the desired conclusion. Here we present an alternative argument.
It follows from Hawkes and Pruitt \cite{HP74} (see also \cite{SXXZ}) that the following uniform dimension result holds:
\begin{equation}\label{Eq:HP74}
 \PP^x\left( \dimh X (E) = \dimh E  \ \mbox{ for all Borel } E\subset\RR_+\right)=1.
 \end{equation}
For any Borel set $F\subset\RR$, let $E = X^{-1}(F)$. Then $X(E) \subseteq F$. On the event in \eqref{Eq:HP74},
we have $\dimh E = \dimh X(E) \le \dimh F$.
Hence, $\PP^x\left(\dimh X^{-1}(F) \le \dimh F\,  \mbox{ for all } F\subset\RR\right)=1.$
\end{proof}

\section{Proof of the lower bound}

We assume that $1 < \al \le 2$. It follows from Kesten \cite{kesten69} and Hawkes \cite{hawkes86}
that $X$ hits points and has local times $\{L_t^x, t \ge 0, x \in \RR\}$.
The local times
characterize the sojourn properties of $X$  via the occupation density formula:  For all $t\ge 0$ and
all Borel measurable function $f:\RR\to\RR$,
\begin{align*}
\int_0^t f(X(s))\rd s = \int_{\RR} f(x)L^x_t\rd x.
\end{align*}
Moreover,  there is a version of the local times, still denoted by $\{L_t^x, t \ge 0, x \in \RR\}$,
which is jointly continuous in $(t, x)$; see e.g., \cite{Bertoin96book, MarcusRosen06}.

We use  the H\"older continuity of the local times of $X$ to prove the uniform lower bound for the
inverse image sets.  This approach has been previously used by Kaufman \cite{Kaufman85},
which was extended by Monrad and Pitt \cite{MonradPitt86} in their study of inverse images
of recurrent Gaussian fields. In both articles,  the uniform modulus of continuity of the sample
paths were used.
Since the sample paths of the $\alpha$-stable L\'evy process $X$ are discontinuous, we will
apply a covering principle in \cite{Xiao04survey, SXXZ} for
the range of $X$.
Denote $\mathcal C_n$ any partition of $\RR_+$ of intervals of length $2^{-n}$.
We recall here the covering principle, tailored to our situation.

\begin{lemma}\label{lem:cov_range}
Let $0<\ga<\frac 1 \al$. There exists a finite positive integer $K$, such that $\PP^x$-a.s.,
for all $n$ large enough, $X(I)$ can be covered by $K$
intervals of diameter $2\cdot 2^{-n\ga}$, for all $I\in\mathcal C_n$.
\end{lemma}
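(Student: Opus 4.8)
The plan is to follow the classical covering strategy for the range of a self-similar process (as in \cite{HP74, Xiao04survey, SXXZ}): reduce matters to controlling, for a single dyadic time interval $I$ of length $2^{-n}$, the number of spatial intervals of length $2\cdot 2^{-n\ga}$ needed to cover $X(I)$, and then sum the failure probabilities over all $I$ and apply the Borel--Cantelli lemma. Since $\ga<\frac1\al$, the spatial scale $2^{-n\ga}$ is much larger than the natural displacement scale $2^{-n/\al}$ of $X$ over a time interval of length $2^{-n}$, so one expects only a bounded number $K$ of such intervals to be needed.

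Fix $T>0$ and first restrict attention to intervals $I\subset[0,T]$, deferring the unbounded case to a countable union over $T\in\NN$. Write $I=[a,a+2^{-n}]$ and set $\ell=2\cdot 2^{-n\ga}$. Define stopping times $\sigma_0=a$ and, for $k\ge 1$,
$\sigma_k=\inf\{s>\sigma_{k-1}:|X(s)-X(\sigma_{k-1})|>\ell\}$.
On each block $[\sigma_{k-1},\sigma_k)$ the process stays within distance $\ell$ of $X(\sigma_{k-1})$, so $X([\sigma_{k-1},\sigma_k))$ lies in a single spatial interval of diameter $2\ell$; a jump of $X$ only transports the process to the starting point $X(\sigma_k)$ of the next block. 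Consequently, if $\sigma_K\ge a+2^{-n}$ then $X(I)$ is covered by $K$ intervals of diameter $2\ell$, and hence the failure event is contained in $\{\sigma_K<a+2^{-n}\}$. This stopping-time decomposition is exactly what allows discontinuous paths to be handled, in place of the uniform modulus of continuity used in the continuous-path arguments.

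By spatial homogeneity, the strong Markov property, and self-similarity, the increments $\sigma_k-\sigma_{k-1}$ are independent and each distributed as $\ell^\al\tau$, where $\tau=\inf\{u\ge 0:|X(u)|>1\}$ under $\PP^0$. Therefore $\PP^x(\sigma_K<a+2^{-n})\le \PP^0\big(\tau<2^{-n}/\ell^\al\big)^K$, and by self-similarity combined with the tail bound $\PP^0\big(\sup_{0\le u\le 1}|X(u)|>\la\big)\le C\la^{-\al}$ one obtains $\PP^0(\tau<\ep)\le C\ep$. With $\ell=2\cdot 2^{-n\ga}$ this gives $\PP^x(\sigma_K<a+2^{-n})\le C'\,2^{-nK(1-\al\ga)}$, where $1-\al\ga>0$. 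Summing over the $O(T2^n)$ intervals $I\subset[0,T]$ and over $n$ produces a convergent series precisely when $K(1-\al\ga)>1$; choosing the finite integer $K>\tfrac{1}{1-\al\ga}$ and applying Borel--Cantelli yields the assertion for all $I\subset[0,T]$, and letting $T\to\infty$ finishes the argument.

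The technical heart is the exit-time estimate $\PP^0(\tau<\ep)\le C\ep$, which rests on the polynomial tail $\PP^0(\sup_{0\le u\le 1}|X(u)|>\la)\le C\la^{-\al}$ of the stable supremum; this is where the heavy-tailed/jump structure of $X$ enters, and it is valid for all $1<\al\le 2$ (the Gaussian case $\al=2$ being even more favorable). The only other point requiring care is the bookkeeping over the unbounded time set $\RR_+$, which the localization to $[0,T]$ together with the final countable union resolves, the key being that the threshold $K>\tfrac{1}{1-\al\ga}$ does not depend on $T$.
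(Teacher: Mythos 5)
Your proof is correct, and it takes a genuinely different (more self-contained) route than the paper. The paper disposes of the lemma in three lines: it verifies condition (2.1) of the general covering principle \cite[Lem.~2.1]{SXXZ}, namely that by spatial homogeneity and scaling $\PP^x\big(\sup_{0\le s\le 2^{-n}}|X(s)-x|\ge 2^{-n\ga}\big)=\PP^0\big(\sup_{0\le s\le 1}|X(s)|\ge 2^{n(\frac1\al-\ga)}\big)\le 2^{-n\de}$ with $\de=1-\ga\al$, by the maximal estimate \cite[Thm.~5.1]{BottcherSchillingWang13}, and then quotes that lemma. What you do instead is reprove the covering principle itself: the exit-time block decomposition $\sigma_k$, the i.i.d.\ structure of the increments $\sigma_k-\sigma_{k-1}$ via the strong Markov property, homogeneity and self-similarity, the bound $\PP^0(\tau<\ep)\le C\ep$, and a union bound over the $O(T2^n)$ intervals plus Borel--Cantelli with any integer $K>1/(1-\al\ga)$. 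The key probabilistic input is identical in both arguments --- the supremum tail $\PP^0\big(\sup_{0\le u\le 1}|X(u)|>\la\big)\le C\la^{-\al}$ (Pruitt's estimate, which is exactly what \cite[Thm.~5.1]{BottcherSchillingWang13} supplies) --- so your argument is in effect a specialization of the proof of \cite[Lem.~2.1]{SXXZ} to stable processes; it buys self-containedness and makes transparent why jumps are harmless (each block simply restarts wherever a jump lands), while the paper buys brevity at the cost of an external reference. Two harmless blemishes you should fix: with $\ell=2\cdot2^{-n\ga}$ your blocks produce intervals of diameter $4\cdot 2^{-n\ga}$ rather than $2\cdot2^{-n\ga}$ (take $\ell=2^{-n\ga}$, or double $K$, which is permitted since $K$ is an arbitrary finite integer); and on the boundary event $\{\sigma_K=a+2^{-n}\}$ the endpoint value $X(a+2^{-n})$ may escape your $K$ intervals, so the failure event should be placed inside $\{\sigma_K\le a+2^{-n}\}$, which satisfies the same estimate. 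Finally, your localization to $[0,T]$ with $K$ independent of $T$ is the correct reading of the statement: for fixed $n$ no almost-sure bound can hold simultaneously for the infinitely many $I\in\mathcal C_n$ filling all of $\RR_+$, and the paper only ever applies the lemma on $[0,1]$.
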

\begin{proof}
It suffices to verify condition (2.1) in the statement of \cite[Lem. 2.1]{SXXZ}, namely,
there exist $\de>0$ and $K_0<\infty$ such that
\begin{align*}
\PP^x \Big(\sup_{0\le s\le 2^{-n}}|X(s)-x|\ge 2^{-n\ga} \Big) \le K_0 2^{-n\de}.
\end{align*}
By spatial homogeneity and scaling, the probability above is equal to
\begin{align*}
\PP^0 \Big(\sup_{0\le s\le 1}|X(s)|\ge 2^{n(\frac 1 \al -\ga)}\Big),
\end{align*}
which, by \cite[Thm. 5.1]{BottcherSchillingWang13}, is bounded from above by $2^{-n\de}$
with $\de=1-\ga\al$, as desired.
\end{proof}

\mk
Let $L^*({[s,t]})= \sup_{x\in\RR} (L^x_t - L^x_s)$
be the maximum local time of $X$ on $[s, t]$.
We recall now the following result due to Perkins \cite{Perkins86} on the
uniform modulus of continuity (in time) of the maximum local time of a strictly
$\al$-stable L\'evy process $X$ with index $\alpha \in (1, 2]$.

\begin{lemma}\label{lem:mod_LT}  There exists a finite positive constant $c_1$ such that
\begin{equation}\label{Eq:Uni-maxLT}
\limsup_{r\to 0}\sup_{|s-t|<r \atop 0\le s<t\le 1} \frac{L^*({[s,t]})}{r^{1 - \frac 1 \al}
(\log 1/r)^{\frac 1 \al}} = c_1, \quad \PP^x\mbox{-a.s.}
\end{equation}
\end{lemma}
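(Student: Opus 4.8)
The plan is to treat this as Perkins' theorem and to record the probabilistic mechanism behind it; the present paper only needs the citation \cite{Perkins86}, but the structure is worth spelling out. Everything rests on the scaling relation for the maximal local time. Arguing through the occupation density formula under the self-similarity $X(ct)\stackrel{d}{=}c^{1/\al}X(t)$, one checks that $L^*([0,h])\stackrel{d}{=}h^{1-1/\al}L^*([0,1])$. This forces the exponent $1-\frac1\al$ in \eqref{Eq:Uni-maxLT}, and it tells us that the factor $(\log 1/r)^{1/\al}$ must be the price of maximizing over the $\sim r^{-1}$ disjoint time windows of length $r$ sitting inside $[0,1]$.

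First I would establish a sharp large-deviation estimate for the maximal local time over a single unit window, of the form $\PP(L^*([0,1])\ge\la)=\exp(-(\la/c_1)^{\al}(1+o(1)))$ as $\la\to\infty$. The cleanest way to see that the exponent is exactly $\al$ is through the inverse local time: under $\PP^y$ the process $\ell\mapsto\tau^y_\ell=\inf\{t:L^y_t>\ell\}$ is a stable subordinator of index $1-\frac1\al$, which is precisely where the hypothesis $\al>1$ enters, guaranteeing that points are hit and that the point local time exists (Kesten \cite{kesten69}, Hawkes \cite{hawkes86}). Then $\PP^y(L^y_h>\ell)=\PP(\tau^y_\ell<h)$, and the small-value asymptotics of a stable subordinator of index $\be=1-\frac1\al$, namely $\PP(\tau_1<s)=\exp(-c\,s^{-(\al-1)}(1+o(1)))$, together with the scaling $\tau_\ell\stackrel{d}{=}\ell^{\al/(\al-1)}\tau_1$, give $\PP^y(L^y_h>\la h^{1-1/\al})\approx\exp(-c\la^{\al})$ at the fixed level $y$. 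To pass from a single level to the supremum over $x$, I would use that the occupation measure of $[0,h]$ is supported in a region of diameter of order $h^{1/\al}$, discretize that region, and control the oscillation of $x\mapsto L^x_h$ by its Hölder continuity; this transfers the tail to $L^*([0,h])$ with the same exponent $\al$.

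With the tail in hand, the upper bound in \eqref{Eq:Uni-maxLT} follows from discretization plus Borel--Cantelli. Partition $[0,1]$ into $\sim r_k^{-1}$ windows of length $r_k=2^{-k}$, apply the tail bound with $\la=(c_1+\ep)(\log 1/r_k)^{1/\al}$ so that each window exceeds $\la\,r_k^{1-1/\al}$ with probability at most $r_k^{(1+\ep/c_1)^{\al}}$, and note that the union bound over the $r_k^{-1}$ windows is summable in $k$ since $(1+\ep/c_1)^{\al}>1$. A monotonicity argument upgrades this from dyadic windows to arbitrary $[s,t]$ with $|s-t|<r$, and interpolation between the geometric radii $r_k$ and general $r$ removes the dyadic restriction. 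The matching lower bound comes from a second-moment / near-independence argument: on the $\sim r^{-1}$ disjoint windows the increments $L^*([kr,(k+1)r])$ are, by the Markov property, conditionally independent copies of $r^{1-1/\al}L^*([0,1])$, and the sharp lower tail shows that with overwhelming probability at least one of them exceeds $(c_1-\ep)\,r^{1-1/\al}(\log 1/r)^{1/\al}$.

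The hard part is the exact constant $c_1$. The order of magnitude $r^{1-1/\al}(\log 1/r)^{1/\al}$ falls out of scaling and the subordinator tail almost for free, but identifying the precise limit in \eqref{Eq:Uni-maxLT} requires the sharp large-deviation rate for the \emph{maximal} local time $L^*([0,1])$, in which the spatial maximization and the temporal large deviation genuinely interact, together with a lower-bound construction certifying that this extremal rate is actually attained somewhere in $[0,1]$. This is exactly the delicate analysis carried out by Perkins \cite{Perkins86}, which we invoke.
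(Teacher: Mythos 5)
The paper does not actually prove this lemma --- it recalls it verbatim as a known theorem of Perkins \cite{Perkins86} --- and your proposal ultimately does the same, explicitly deferring the genuinely hard part (the sharp constant $c_1$ and the transfer of the tail from a fixed spatial level to the supremum over $x$) to that reference. Your supplementary heuristic is accurate and has all the right exponents: the scaling $L^*([0,h])\stackrel{d}{=}h^{1-\frac 1 \al}L^*([0,1])$, the inverse local time being a stable subordinator of index $1-\frac 1 \al$ (valid precisely because $\al>1$) whose small-value asymptotics yield $\PP^y\big(L^y_h>\la h^{1-\frac 1\al}\big)\approx e^{-c\la^\al}$, and the Borel--Cantelli/independent-windows dichotomy for the upper and lower bounds --- so this is essentially the same approach as the paper, namely an invocation of Perkins' theorem.
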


We refer to  Ehm \cite[Thm. 2.1]{Ehm81}
or Khoshnevisan, Zhong and Xiao \cite[Thm. 4.3]{KhoshnevisanXiaoZhong03SPA} for related
results; and to Marcus and Rosen \cite{MarcusRosen92, MarcusRosen92p,MarcusRosen06}
for more sample path properties (in the space variable) of the local times of symmetric Markov processes.

\sk

We are ready to give the proof of the lower bound in Theorem \ref{theo}.

\begin{proof}[Proof of Theorem \ref{theo}: lower bound.]
It suffices to consider compact set $F$. For any compact $F\subset \RR$ and $\ep>0$, by
Frostman's lemma (cf. \cite{Fal}) there exists a probability measure $\mu$ supported
 on $F$ such that $\mu(B)\le |\mbox{diam}(B)|^{\dimh F-\ep}$ for any interval $B\subset \RR$
 with $|B|\le 1$.  Define the random measure $\la$ by
\begin{align}\label{Def:la}
\la([a,b]) =  \int_\RR (L^x_b - L^x_a)\mu(\rd x)  \quad \mbox{ for } 0\le a\le b.
\end{align}
It is clear that $\la(\rd t)$ is supported on $X^{-1}(F)\subset \RR^+$, $\la(\RR^+)>0$, and
\begin{equation*}
\la([a,b]) \le L^*([a,b]) \mu(\ov{X([a,b])}).
\end{equation*}
Let $n$ be sufficiently large, we have by Lemma \ref{lem:mod_LT} that
 $$L^*([a,a+2^{-n}])\le 2^{-n(1-\frac 1 \al -\ep)} $$
uniformly for $a\in[0,1-2^{-n}]$. On the other hand, by Lemma \ref{lem:cov_range},
there exist a sequence of intervals $\{I_i\}_{1\le i\le K}$ of length $2^{-n\ga}$ with $\ga<1/\al$
such that the closure of ${X([a,a+2^{-n}])}$ is covered by the union of $I_i$, therefore,
 \begin{align}\label{Eq:cov-upp}
 \mu(\ov{X([a,a+2^{-n}])}) \le \sum_{i=1}^K \mu(I_i) \le K 2^{-n\ga(\dimh F -\ep)}.
 \end{align}
 We thus obtain
 \begin{align*}
 \la([a,a+2^{-n}]) \le K 2^{-n(1-\frac 1 \al + \ga\dimh F -2\ep)}.
 \end{align*}
It follows that $\la(B)\le \mbox{diam}(B)^{1-\frac 1 \al + \ga\dimh F -2\ep}$ for all Borel sets $B$ with sufficiently
small diameter.  This and Frostman's lemma imply that
\begin{align*}
\PP^x \bigg(\dimh X^{-1}(F)\ge 1-\frac 1 \al + \ga\dimh F -2\ep \mbox{ for all compact Borel  } F\bigg)=1.
\end{align*}
Letting $\ga\uparrow \frac 1 \al$, then $\ep\downarrow 0$ yields the desired lower bound for $\dimh X^{-1}(F)$.
This finishes the proof of Theorem \ref{theo}.
\end{proof}
\mk

\section{Concluding remarks}
This note
raises several interesting questions for further investigation. In the following, we list three of them and discuss briefly
the main difficulties. Solutions of these questions will require developing new techniques for
L\'evy processes.
\begin{itemize}
\item[(i).]\,As having mentioned in the Introduction, we think that
Theorem \ref{theo} holds for  $\alpha = 1$. However, without a local time, it is not clear to us how to construct a random
Borel measure supported on $X^{-1}(F)$ such that  Frostman's lemma is applicable.
\item[(ii).]\, In  \cite[Thm. 22.1]{PortStone71AIF}, the asymptotic result for the hitting times was obtained for recurrent
L\'evy processes with regularly varying $\la$-potential densities, see also the recent development by Grzywny and
Ryznar \cite{GrzywnyRyznar17}. Our method for proving the upper bound of $\dimh X^{-1}(F)$ is still applicable if the
characteristic exponent of $X$ is regularly varying at zero with index $\al\in (1,2]$. On the other hand, by modifying
the methods in Ehm \cite{Ehm81}, Khoshnevisan, Zhong and Xiao \cite[Thm. 4.3]{KhoshnevisanXiaoZhong03SPA},
we can prove an upper bound for the uniform modulus of continuity in the time variable for the maximum local time as
the one in Lemma \ref{lem:mod_LT}  for L\'evy processes with regularly varying exponent $\al\in (1,2]$. Hence,
Theorem \ref{theo} is valid for L\'evy processes with regularly varying exponent $\al\in (1,2]$. We believe that a similar
result also holds for a large class of more general Markov processes including stable jump diffusions, stable like processes
and L\'evy-type processes as considered in \cite{SXXZ}. However, proving
such a result would require establishing first the asymptotic results for the hitting times and local times of these Markov processes.
This is pretty challenging and  goes well beyond the scope of the present paper.
We will try to tackle this in a subsequent paper.
\item[(iii).]\, It is natural to expect that the packing dimension analogue of Theorem \ref{theo}  also holds.
Namely, if $X$ is a real-valued strictly $\al$-stable L\'evy process
with $1\le \al\le 2$, then for any $x\in\RR$ one has
\begin{equation}  \label{Eq:packing}
\PP^x \bigg(\dimp X^{-1}(F)= 1 - {1\over \al} + {\dimp F\over \al}  \hbox{ for all Borel sets }\, F \subseteq \R \bigg) =1.
\end{equation}
Here $\dimp $ denotes packing dimension; see Falconer \cite[Chapter 3]{Fal} for its definition and properties, and
\cite{Taylor86,Xiao04survey} for examples of its applications in studying sample path properties of Markov processes.

By using the connection between packing dimension and the upper box-counting (Minkowski) dimension (cf. \cite{Fal}),
one can see that the proof of the upper bound of Theorem \ref{theo} also implies that $\PP^x$-a.s.,
\[
\dimp X^{-1}(F)\le 1 - {1\over \al} + {\dimp F\over \al}  \hbox{ for all Borel sets }\, F \subseteq \R.
\]
In order to prove the reverse inequality, one may apply the lower density theorem for packing
measure in \cite[Theorem 5.4]{TT85} and prove  that for any $\gamma < 1/\alpha$ and $\varepsilon > 0$,
$$  \sup_{a \in X^{-1}(F)}  \liminf_{r \to 0} \frac{\lambda([a, a+r] )} {r^{1 - \alpha^{-1} + \gamma \dimp F - 2 \varepsilon}} \le c_2 < \infty,$$
where $\lambda$ is the random measure defined in \eqref{Def:la} and $c_2$ is a finite constant. We are not able to prove
this because (unlike the Hausdorff dimension case) the terms $\mu(I_i)$ in \eqref{Eq:cov-upp} can not be controlled
for all $i$ by the same $n$.
\end{itemize}

\bigskip
\noindent
{\bf Acknowledgements:}
We thank the referee for carefully reading the
manuscript and providing some useful suggestions.

\bigskip
\noindent

\bibliographystyle{plain}

\end{document}